\documentclass{amsart}
\usepackage{amsmath}
\usepackage{color}
\usepackage{mathtools}

\newtheorem{theorem}{Theorem}[section]
\newtheorem{lemma}[theorem]{Lemma}
\newtheorem{proposition}[theorem]{Proposition}

 \theoremstyle{definition}
\newtheorem{definition}[theorem]{Definition}
\newtheorem{example}[theorem]{Example}

\theoremstyle{remark}

\numberwithin{equation}{section}

\begin{document}

\title[On solitons of the Hesse-Koszul flow]
{On solitons of a geometric flow on Hessian manifolds}

\author{Hanzhang Yin}
\address{School of Mathematics, Harbin Institute of Technology,
         Harbin, Heilongjiang 150001, China}
\email{YinHZ@hit.edu.cn}
\thanks{The author is supported by the Research Start-up Fund of Harbin Institute of Technology.}



\begin{abstract}

In this work, we introduce the Koszul solitons of the Hesse-Koszul flow, i.e., self-similar solutions for a geometric flow on Hessian manifolds. We obtain some partial differential equations and basic properties of Koszul solitons, and provide some examples. The key idea is to investigate the relationship between affine manifolds and complex manifolds, this method can also be used to study some other geometric problems on Hessian manifolds. It is worth noting that Hessian manifolds, as a special case of affine manifolds, have wide applications in statistics and information geometry.

\noindent{Keywords:} Koszul solitons; Geometric flow; Hessian manifolds

\end{abstract}

\maketitle

\section{Introduction}

An affine manifold $(M,\nabla)$ is a differentiable manifold endowed with a flat connection $\nabla$. A Riemannian metric $g$ on an affine manifold $(M,\nabla)$ is called Hessian if $g$ can be locally expressed by $g=\nabla d\varphi$. Let $(M, \nabla, g_0)$ be a Hessian manifold, i.e., an affine manifold admits a Hessian metric $g_0$, consider the following geometric flow
\begin{equation}
\label{e1.2}
\left\{ \begin{aligned}
   &\frac{\partial }{\partial t}{{g}}=2\beta (g)  \\
      &g(0)={{g}_{0}}  \\
\end{aligned} \right.
\end{equation}
on $M$, where the second Koszul form $\beta (g)$ is defined by
\begin{equation}
\beta_{ij}=\frac{1}{2}\frac{\partial^2 \log \det[g_{pq}]}{\partial x^i \partial x^j}
\end{equation}
locally with respect to an affine coordinate system.
Mirghafouri-Malek \cite{r12} first introduced \eqref{e1.2} and proved that the evolved metrics $g(t)$ along the flow remain Hessian, in the terminology of Puechmorel-T{\^o} \cite{r13} the flow \eqref{e1.2} is called the Hesse-Koszul flow.

The concept of Ricci solitons, i.e., self-similar solutions of the Ricci flow, was introduced by Hamilton \cite{Ha}, for more related research, see \cite{Zhu17,DRS17}. In this paper, motivated by Ricci solitons, we introduce the self-similar solutions of \eqref{e1.2}, namely the Koszul solitons (see Definition \ref{d5.1}), and study the relationship between Koszul solitons and Ricci solitons (see Theorem \ref{t5.3}).

The Hesse-Koszul flow can solve some geometric problems, see \cite{r13,JY25}. There are also some other important problems in affine differential geometry, such as the classification of affine hyperspheres \cite{Ca58,CY86}, Chern's conjecture \cite{Klingler17}. The Hessian metric, which is also called the K\"{a}hler affine metric by Cheng-Yau \cite{CY82}, is an important concept in statistics and information geometry \cite{Am16, AN00, AJLS17}, it also appears in the study of physics \cite{SYZ96}. More details on the geometry of Hessian manifolds can be found in \cite{r11}.

The second Koszul form $\beta$ of a Hessian manifold plays a similar role as Ricci forms in K\"{a}hler geometry, a related problem is the real Calabi conjecture and the existence of Hesse-Einstein metrics \cite{CY82,r13}, such metrics can also be regarded as Koszul solitons (see Theorem \ref{t5.2}). By exploring the relationships between affine manifolds and complex manifolds (see Lemma \ref{l3.1}, Lemma \ref{l3.4} and Theorem \ref{t3.6}), we obtain the uniqueness of complete Hesse-Einstein metrics with $\beta>0$ (see Theorem \ref{t3.10}), and prove a rigidity theorem for bounded affine functions on complete Hessian manifolds with $\beta\leq0$ (see Theorem \ref{t3.8}).

We also study some elliptic and parabolic equations related to Koszul solitons, and obtain a relationship between certain fully non-linear equations on affine manifolds and complex manifolds (see Theorem \ref{t4.1}). An important example of this type of equations is the Monge-Amp\`{e}re equation, the existence of certain Koszul solitons can be reduced to solving Monge-Amp\`{e}re equation \eqref{en4.3}.

The rest of this paper is organized as follows. In Section 2, we provide some preliminaries which may be used later.
In Section 3, we study some geometric properties of affine manifolds. In Section 4, we introduce the definition of Koszul solitons and discuss their fundamental properties. In Section 5, we provide some examples of Koszul solitons. In Section 6, we consider the partial differential equations related to Koszul solitons. In Section 6, we prove Theorem \ref{t3.8}.

\section{Preliminaries}

In this section, we collect some useful results for flat connection and Hessian manifolds. The materials can be found in \cite{r11}.

A connection $\nabla$ is said to be flat if the torsion tensor $T$ and the curvature tensor $R$ vanish identically.
\begin{proposition}\emph{(\cite{r11})}
\label{prop1}
\

\emph{(1)}
\begin{minipage}[t]{0.92\linewidth}
Suppose that $M$ admits a flat connection $\nabla$. Then there exist local coordinate systems on $M$ such that $\nabla_{\partial /\partial x^i} \partial /\partial x^j=0$. The changes between such coordinate systems are affine transformations.
\end{minipage}

\emph{(2)}
\begin{minipage}[t]{0.92\linewidth}
Conversely, if $M$ admits local coordinate systems such that the changes of the local coordinate systems are affine transformations, then there exists a flat connection $\nabla$ satisfying $\nabla_{\partial /\partial x^i} \partial /\partial x^j=0$ for all such local coordinate systems.
\end{minipage}
\end{proposition}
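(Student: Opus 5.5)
For part (1), I will work with the parallel vector fields of $\nabla$. Because $R=0$, the horizontal distribution on the frame bundle is integrable; equivalently, parallel transport around small loops is trivial. So on a simply connected (say contractible) neighborhood $U$ of any point we can pick a frame $X_1,\dots,X_n$ with $\nabla X_i=0$, by parallel-transporting a basis of $T_pM$ along any path. Concretely, one can first transport along the $x^1$-lines and then successively along the other coordinate directions. Flatness makes the result independent of the order, so $\nabla X_i=0$ in every direction.

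Next, torsion-freeness gives $[X_i,X_j]=\nabla_{X_i}X_j-\nabla_{X_j}X_i-T(X_i,X_j)=0$. Commuting, pointwise independent vector fields are coordinate vector fields: by Frobenius, or by composing their flows, there are local coordinates $x^i$ with $\partial/\partial x^i=X_i$. Hence $\nabla_{\partial/\partial x^i}\partial/\partial x^j=0$.

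For the transition maps, suppose $x$ and $y$ are two such systems on an overlap. Write $\partial/\partial y^a=\sum_i(\partial x^i/\partial y^a)\,\partial/\partial x^i$. Applying $\nabla_{\partial/\partial y^b}$ to both sides, the left side and $\nabla(\partial/\partial x^i)$ vanish, so
\[
\frac{\partial^2 x^i}{\partial y^a\partial y^b}=0 .
\]
Therefore $x$ is affine in $y$ on each connected component of the overlap.

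For part (2), on each chart I will define $\nabla$ by declaring its Christoffel symbols to be zero. The only point to check is that these local definitions agree on overlaps. If $y=Ax+b$, the connection transformation law is
\[
\Gamma'^{\,c}_{ab}=\frac{\partial y^c}{\partial x^k}\frac{\partial x^i}{\partial y^a}\frac{\partial x^j}{\partial y^b}\Gamma^k_{ij}+\frac{\partial y^c}{\partial x^k}\frac{\partial^2 x^k}{\partial y^a\partial y^b}.
\]
The second term vanishes because the second derivatives of an affine map are zero, so $\Gamma=0$ implies $\Gamma'=0$. This gives a globally well-defined connection. In these coordinates both $T$ and $R$ are expressions in $\Gamma$ and its derivatives, so they vanish, and $\nabla$ is flat.

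The main obstacle is the existence of the parallel frame in (1). This is where $R=0$ is essential. I would handle it with the standard argument: with $\Gamma$ as the connection matrix, the system $dX=-\Gamma X$ is integrable exactly when $d\Gamma+\Gamma\wedge\Gamma=0$, which is precisely the vanishing of curvature. By the Frobenius theorem the system then has solutions with any prescribed initial value.
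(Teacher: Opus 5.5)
The paper gives no proof of this proposition; it quotes it from \cite{r11}, so there is no in-paper argument to compare yours against. Your proof is correct and is the standard one. In (1), a local parallel frame exists because $R=0$. Your Frobenius argument for $dX=-\Gamma X$ is the rigorous form of the loosely stated ``order-independence'' of successive parallel transport. Then $T=0$ makes the frame commute, so it is a coordinate frame, and transition maps between such charts have vanishing second derivatives. In (2), the inhomogeneous term of the Christoffel transformation law vanishes under affine changes of coordinates, so the local choices $\Gamma=0$ glue to a global flat connection.
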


Given a flat connection $\nabla$, a local coordinate system $\{x^1,\ldots,x^n\}$ satisfying $\nabla_{\frac{\partial}{\partial x^i}} \frac{\partial}{\partial x^j}=0$ is called an affine coordinate system with respect to $\nabla$. $f$ is called an affine map between two affine manifolds if and only if it is an affine map in local affine coordinates.
Let $(M,\nabla,g)$ be a Hessian manifold and $g$ can be locally expressed by
\[g_{ij}=\frac{\partial^2 \varphi}{\partial x^i \partial x^j}\]
where $\{x^1,\ldots,x^n\}$ is an affine coordinate system with respect to $\nabla$.
\begin{proposition}\label{p2.3}\emph{(\cite{r11})}
Let $(M,\nabla)$ be an affine manifold and $g$ be a Riemannian metric on $M$. Then $g$ is a Hessian metric if and only if $\dfrac{\partial g_{ij}}{\partial x^k}=\dfrac{\partial g_{kj}}{\partial x^i}$ for all $i,j,k$.
\end{proposition}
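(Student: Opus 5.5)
The plan is to prove both directions locally, working in a fixed affine coordinate system $\{x^1,\ldots,x^n\}$ given by Proposition \ref{prop1}. Since being Hessian is a local condition, it suffices to work on a contractible (say convex) coordinate neighborhood $U$ and produce a potential $\varphi$ there with $g_{ij}=\partial^2\varphi/\partial x^i\partial x^j$.

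The necessity direction is immediate. If $g_{ij}=\partial_i\partial_j\varphi$, then
\[
\frac{\partial g_{ij}}{\partial x^k}=\partial_k\partial_i\partial_j\varphi,
\]
which is symmetric in all indices by equality of mixed partials. In particular it equals $\partial g_{kj}/\partial x^i$.

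For sufficiency, the plan is to integrate twice using the Poincaré lemma.

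\emph{First integration.} Fix $j$ and consider the $1$-form $\theta_j=\sum_i g_{ij}\,dx^i$ on $U$. Its exterior derivative is
\[
d\theta_j=\sum_{k<i}\Bigl(\frac{\partial g_{ij}}{\partial x^k}-\frac{\partial g_{kj}}{\partial x^i}\Bigr)dx^k\wedge dx^i,
\]
which vanishes by the hypothesis. By the Poincaré lemma on $U$ there are functions $\eta_j$ with $\partial_i\eta_j=g_{ij}$.

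\emph{Second integration.} Consider the $1$-form $\eta=\sum_j\eta_j\,dx^j$. Its exterior derivative has coefficients
\[
\partial_i\eta_j-\partial_j\eta_i=g_{ij}-g_{ji},
\]
which vanish by the symmetry of $g$. So $\eta=d\varphi$ for some function $\varphi$, and then $\partial_i\partial_j\varphi=\partial_i\eta_j=g_{ij}$.

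Finally, I will remark that the condition is independent of the choice of affine coordinates. Under an affine change of coordinates, $g_{ij}$ transforms by constant matrices, so the condition $\partial_k g_{ij}=\partial_i g_{kj}$ transforms tensorially. Hence the local statement is well posed on all of $M$.

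The only point needing care is the second integration step, where the symmetry of $g$ is what makes $\eta$ closed. Otherwise the argument is routine.
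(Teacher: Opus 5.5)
Your proof is correct and is essentially the standard argument. The paper gives no proof of its own and simply cites Shima's book, where the converse is proved by the same double use of the Poincaré lemma: the hypothesis gives local $\eta_j$ with $\partial_i\eta_j=g_{ij}$, and the symmetry $g_{ij}=g_{ji}$ then gives $\eta_j=\partial_j\varphi$. Your remark on chart-independence is also correct, since $\partial_k g_{ij}$ are the components of the tensor $\nabla g$ in affine coordinates.
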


\begin{definition}(\cite{r11})
\label{Kos}
Let $(M,\nabla,g)$ be a Hessian manifold, and denote the Levi-Civita connection of
the Riemannian manifold $(M,g)$ by $\hat{\nabla}$. Let $\gamma=\hat{\nabla}-\nabla$. The first Koszul form $\alpha$ (a closed 1-form) and the second Koszul form $\beta$ (a symmetric bilinear form) for $(\nabla,g)$ are defined by
\[\nabla_X v=\alpha(X)v\]
\[\beta=\nabla \alpha\]
It follows that
\[\alpha(X)={\rm Tr} {\gamma_X}\]
and
\[\alpha_i=\frac{1}{2}\frac{\partial \log \det[g_{pq}]}{\partial x^i}=\gamma_{\;ki}^k\]
\[\beta_{ij}=\frac{\partial \alpha_i}{\partial x^j}=\frac{1}{2}\frac{\partial \log \det[g_{pq}]}{\partial x^i \partial x^j}\]
locally.
\end{definition}

\section{Some geometric properties}

In this section, we discuss the relationship between affine structures and complex structures, and derive some geometric properties of affine manifolds. These contents form the geometric foundation for the subsequent study of Koszul solitons.

Consider a flat manifold $(M,\nabla)$, and let $TM$ denote its tangent bundle with projection $\pi:TM\rightarrow M$. Given an affine coordinate system $\{x^1,\ldots,x^n\}$ on $M$, we define
\begin{equation}\label{e3.1}
z^j=\xi^j+\sqrt{-1}\xi^{n+j}
\end{equation}
where $\xi^i=x^i\circ \pi$ and $\xi^{n+j}=dx^i$. Since a affine bijection can be extended to a biholomorphic map, the $n$-tuples of functions given by $\{z^1,\ldots,z^n\}$ yields a holomorphic coordinate system on $TM$. Denote the complex structure tensor of the complex manifold $TM$ by $J_\nabla$, then
\begin{equation}
d\pi(\frac{\partial}{\partial \xi^i})=\frac{\partial}{\partial x^i},\;\;\;\;d\pi(\frac{\partial}{\partial \xi^{n+i}})=0,
\end{equation}
\begin{equation}\label{e3.3}
d\pi(\frac{\partial}{\partial z^i})=d\pi(\frac{\partial}{\partial \bar{z}^i})=\frac{1}{2}d\pi(\frac{\partial}{\partial \xi^i})=\frac{1}{2}\frac{\partial}{\partial x^i},
\end{equation}
for any $i=1,\ldots,n$, where $d\pi$ is the differential of $\pi$.
For a Riemannian metric $g$ on $M$ we set
\begin{equation}\label{e3.4}
g^T=\sum_{i,j=1}^{n}(g_{ij}\circ \pi)dz^id\overline{z}^j.
\end{equation}
Then $g^T$ is a Hermitian metric on the complex manifold $(TM,J_\nabla)$.
\begin{lemma}\label{l3.1}
Let $f$ be a smooth function on a flat manifold $(M,\nabla)$. For any multi-indices $\alpha,\beta$, we have
\begin{equation}
\frac{\partial^{|\alpha|+|\beta|}(f\circ\pi)}{\partial z^\alpha\partial\bar{z}^\beta}=\frac{1}{2^{|\alpha|+|\beta|}}\frac{\partial^{|\alpha|+|\beta|}f}{\partial x^\alpha\partial x^\beta}\circ\pi,
\end{equation}
where $f\circ\pi$ is a smooth function on $TM$, the affine coordinate system $\{x^1,\ldots,x^n\}$ and holomorphic coordinate system $\{z^1,\ldots,z^n\}$ are as in \eqref{e3.1}.
\end{lemma}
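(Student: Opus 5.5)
The plan is to argue by induction on $|\alpha|+|\beta|$, using only the chain rule in the real coordinates $(\xi^1,\ldots,\xi^{2n})$ on $TM$. The key observation is that $f\circ\pi$ depends only on $\xi^1,\ldots,\xi^n$. In the local chart $\{x^1,\ldots,x^n\}$ on $M$ with induced coordinates $\xi^i=x^i\circ\pi$, $\xi^{n+i}$ on $TM$, the map $\pi$ is simply $(\xi^1,\ldots,\xi^{2n})\mapsto(\xi^1,\ldots,\xi^n)$. Hence for any smooth $h$ on $M$,
\[
\frac{\partial (h\circ\pi)}{\partial \xi^i}=\frac{\partial h}{\partial x^i}\circ\pi,\qquad \frac{\partial (h\circ\pi)}{\partial \xi^{n+i}}=0 .
\]
This is just the coordinate form of the formulas for $d\pi$ preceding \eqref{e3.3}.

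From \eqref{e3.1} we have the standard Wirtinger operators
\[
\frac{\partial}{\partial z^i}=\frac12\Big(\frac{\partial}{\partial \xi^i}-\sqrt{-1}\frac{\partial}{\partial \xi^{n+i}}\Big),\qquad \frac{\partial}{\partial \bar z^i}=\frac12\Big(\frac{\partial}{\partial \xi^i}+\sqrt{-1}\frac{\partial}{\partial \xi^{n+i}}\Big).
\]
Applying these to $h\circ\pi$ kills the $\xi^{n+i}$-derivative, so we get the base case
\[
\frac{\partial (h\circ\pi)}{\partial z^i}=\frac{\partial (h\circ\pi)}{\partial \bar z^i}=\frac12\,\frac{\partial h}{\partial x^i}\circ\pi .
\]
This is the statement for $|\alpha|+|\beta|=1$, with $f=h$.

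For the inductive step, the crucial point is that the right-hand side is again of the form $\tilde h\circ\pi$ with $\tilde h=\frac12\partial h/\partial x^i$ a smooth function on $M$. The class of pulled-back functions is therefore preserved, and the base case can be applied repeatedly. Write $\partial^{\alpha}_z\partial^{\beta}_{\bar z}$ as a composition of $|\alpha|+|\beta|$ first-order operators, each of the form $\partial/\partial z^i$ or $\partial/\partial \bar z^j$. These commute, since they are constant-coefficient combinations of coordinate derivatives, so the order is irrelevant. Applying them one at a time, each application contributes a factor $\frac12$ and the corresponding $\partial/\partial x^k$ on $M$. This yields
\[
\frac{1}{2^{|\alpha|+|\beta|}}\,\frac{\partial^{|\alpha|+|\beta|}f}{\partial x^\alpha\partial x^\beta}\circ\pi .
\]
Here the $x$-derivatives also commute, so the multi-index notation on the right is unambiguous.

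There is no real obstacle; the proof is a direct computation. The only point needing care is well-definedness. The coordinates $z^j$ come from a particular affine chart, and the identity is a local statement about that chart, so no compatibility check is required. The remark in the paper that affine changes of coordinates extend to biholomorphisms guarantees that the $z^j$ are genuinely holomorphic coordinates for $J_\nabla$, so $\partial/\partial z^i$ and $\partial/\partial\bar z^i$ are the usual Wirtinger operators associated with that complex structure.
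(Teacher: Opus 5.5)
Your proposal is correct and follows essentially the same route as the paper, which obtains the first-order identity $\frac{\partial(f\circ\pi)}{\partial z^i}=\frac{\partial(f\circ\pi)}{\partial\bar z^i}=\frac12\frac{\partial f}{\partial x^i}\circ\pi$ from the formula for $d\pi$ in \eqref{e3.3} and then iterates. Your version makes explicit two points the paper leaves implicit: each derivative is again a pullback $\tilde h\circ\pi$, so the first-order step can be reapplied, and the Wirtinger operators commute.
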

\begin{proof}
By the definition of the differential of a smooth map and equation \eqref{e3.3}, we have
\begin{equation}
\frac{\partial}{\partial z^i}(f\circ\pi)=\Big(d\pi(\frac{\partial}{\partial z^i})f\Big)\circ\pi=\frac{1}{2}\frac{\partial f}{\partial x^i}\circ\pi,
\end{equation}
\begin{equation}
\frac{\partial}{\partial \bar{z}^i}(f\circ\pi)=\Big(d\pi(\frac{\partial}{\partial \bar{z}^i})f\Big)\circ\pi=\frac{1}{2}\frac{\partial f}{\partial x^i}\circ\pi.
\end{equation}
Applying a new differential operator to both sides of the above two equations and repeating this process, we can prove the lemma.
\end{proof}
The following two properties show the relationship between Hessian manifolds and K{\"a}hler manifolds.
\begin{proposition}\label{p3.2} \emph{(\cite{r11})}
Let $(M,\nabla)$ be a flat manifold and $g$ a Riemannian metric on $M$. Then the following conditions are equivalent.

\noindent\emph{(1)} $g$ is a Hessian metric on $(M,\nabla)$.

\noindent\emph{(2)} $g^T$ is a K{\"a}hler metric on $(TM,J_\nabla)$.

\end{proposition}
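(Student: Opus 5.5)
The approach is to compute the Kähler condition for $g^T$ directly in the holomorphic coordinates $\{z^1,\ldots,z^n\}$ of \eqref{e3.1}, and to translate it into the symmetry condition of Proposition \ref{p2.3} by means of Lemma \ref{l3.1}. Write $h_{i\bar j}=g_{ij}\circ\pi$, so that $g^T=\sum h_{i\bar j}\,dz^i d\bar z^j$ by \eqref{e3.4}. Since $g$ is symmetric and positive definite, the matrix $[h_{i\bar j}]$ is Hermitian (indeed real symmetric) and positive definite, so $g^T$ is a Hermitian metric. Its fundamental form is $\omega=\frac{\sqrt{-1}}{2}\sum h_{i\bar j}\,dz^i\wedge d\bar z^j$. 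The standard criterion says $g^T$ is Kähler iff $d\omega=0$, which in local holomorphic coordinates is equivalent to
\[
\frac{\partial h_{i\bar j}}{\partial z^k}=\frac{\partial h_{k\bar j}}{\partial z^i},\qquad \frac{\partial h_{i\bar j}}{\partial \bar z^k}=\frac{\partial h_{i\bar k}}{\partial \bar z^j}
\]
for all $i,j,k$.

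Next apply Lemma \ref{l3.1} with $|\alpha|+|\beta|=1$ to $f=g_{ij}$. This gives $\partial h_{i\bar j}/\partial z^k=\frac12(\partial g_{ij}/\partial x^k)\circ\pi$ and $\partial h_{i\bar j}/\partial \bar z^k=\frac12(\partial g_{ij}/\partial x^k)\circ\pi$. Substituting, the first Kähler condition becomes $\partial_k g_{ij}=\partial_i g_{kj}$ composed with $\pi$. Because $\pi$ is surjective, this holds on $TM$ iff it holds on $M$. The second condition becomes $\partial_k g_{ij}=\partial_j g_{ik}$, which is the same condition after using the symmetry of $g$. Hence $d\omega=0$ iff $\partial g_{ij}/\partial x^k=\partial g_{kj}/\partial x^i$ for all $i,j,k$. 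By Proposition \ref{p2.3}, this is equivalent to $g$ being Hessian, which proves (1)$\Leftrightarrow$(2).

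The only subtle point is checking that the construction is well defined, i.e.\ independent of the chosen affine chart. Under an affine change $\tilde x=Ax+b$, the induced coordinates transform by $\tilde z=Az+b$, which is holomorphic. Likewise $g_{ij}$ transforms by $A$, so $\sum g_{ij}\,dz^i d\bar z^j$ is chart independent, because $A$ is real and hence $d\bar{\tilde z}=A\,d\bar z$. With that settled, the computation above is purely local and presents no real obstacle. As a sanity check on the direction (1)$\Rightarrow$(2), one can also argue directly: if $g_{ij}=\partial_i\partial_j\varphi$, then Lemma \ref{l3.1} gives $\partial^2(4\varphi\circ\pi)/\partial z^i\partial\bar z^j=h_{i\bar j}$. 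Thus $4\varphi\circ\pi$ is a local Kähler potential, which confirms the result.
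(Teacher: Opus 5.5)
Your proof is correct. The paper gives no proof of its own for this proposition and simply cites \cite{r11}; your argument is essentially the standard one found there. That argument computes $d\omega=0$ for the fundamental form of $g^T$ in the chart $\{z^i\}$, converts derivatives of pulled-back functions via $\partial/\partial z^k=\partial/\partial\bar z^k=\frac12\,\partial/\partial x^k$ (Lemma~\ref{l3.1}), and identifies the resulting condition with the symmetry $\partial_k g_{ij}=\partial_i g_{kj}$ of Proposition~\ref{p2.3}.
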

Furthermore, we have
\begin{proposition}\label{p3.3} \emph{(\cite{r11})}
Let $(M, \nabla, g)$ be a Hessian manifold and $R_{i\bar{j}}^T$ be the Ricci tensor of the K{\"a}hler manifold $(TM,J_\nabla,g^T)$. Then we have
\[R_{i\bar{j}}^T=-\frac{1}{2}\beta_{ij}\circ \pi.\]
\end{proposition}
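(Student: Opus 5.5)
Since $g^T$ is K\"ahler by Proposition \ref{p3.2}, I will use the standard formula for the Ricci form of a K\"ahler metric in holomorphic coordinates,
\[
R^T_{i\bar j}=-\frac{\partial^2}{\partial z^i\partial\bar z^j}\log\det\big[g^T_{p\bar q}\big],
\]
and then convert the complex derivatives into affine ones with Lemma \ref{l3.1}. The key observation is that, by \eqref{e3.4}, the coefficient matrix of $g^T$ in the holomorphic coordinates $\{z^1,\ldots,z^n\}$ of \eqref{e3.1} is exactly $[g_{pq}\circ\pi]$.

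The steps are as follows.

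First, from \eqref{e3.4} I read off $g^T_{p\bar q}=g_{pq}\circ\pi$. Hence $\det[g^T_{p\bar q}]=\det[g_{pq}]\circ\pi$, and so
\[
\log\det[g^T_{p\bar q}]=f\circ\pi \qquad\text{with}\qquad f=\log\det[g_{pq}].
\]
This $f$ is a smooth function defined on the affine chart, since $g$ is positive definite.

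Second, I apply Lemma \ref{l3.1} with $|\alpha|=|\beta|=1$:
\[
\frac{\partial^2 (f\circ\pi)}{\partial z^i\partial\bar z^j}=\frac14\,\frac{\partial^2 f}{\partial x^i\partial x^j}\circ\pi .
\]

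Third, I compare with Definition \ref{Kos}, which gives $\partial_i\partial_j f=2\beta_{ij}$. Combining the three steps,
\[
R^T_{i\bar j}=-\tfrac14\cdot 2\beta_{ij}\circ\pi=-\tfrac12\beta_{ij}\circ\pi .
\]

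The only genuine obstacle is fixing conventions so that the constant comes out as $-\tfrac12$. Two choices matter:
\begin{itemize}
\item the Ricci tensor is $-\partial_i\partial_{\bar j}\log\det$ with no extra factor;
\item the Hermitian metric is $g_{i\bar j}\,dz^i d\bar z^j$, rather than, say, the associated Riemannian metric with coefficient $2\,\mathrm{Re}$.
\end{itemize}
With the normalization of \eqref{e3.4}, these conventions are the ones that produce the factor $\tfrac14\cdot 2=\tfrac12$. I would state them explicitly at the start of the proof.

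I would also note that the answer does not depend on the choice of chart. The functions $\det[g_{pq}]$ change by a constant factor under affine changes of coordinates, which does not affect second derivatives of its logarithm.
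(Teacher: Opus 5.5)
Your proof is correct and is the standard argument: the paper gives no proof of its own and cites \cite{r11}, where the computation is exactly yours, namely $R^T_{i\bar j}=-\partial_{z^i}\partial_{\bar z^j}\log\det[g_{pq}\circ\pi]=-\tfrac14\bigl(\partial_i\partial_j\log\det[g_{pq}]\bigr)\circ\pi=-\tfrac12\beta_{ij}\circ\pi$, using \eqref{e3.4} and Lemma \ref{l3.1}. One small correction to your commentary: the normalization of the Hermitian metric $g^T$ is not one of the conventions that matter, because multiplying $g^T$ by a positive constant adds a constant to $\log\det$ and leaves $R^T_{i\bar j}$ unchanged; only the sign and normalization of $R_{i\bar j}=-\partial_i\partial_{\bar j}\log\det$ affect the factor $-\tfrac12$.
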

\begin{lemma}\label{l3.4}
Suppose $(M,\nabla ,g)$ is a complete connected affine Riemannian manifold. Then its tangent bundle $(TM,J_\nabla,g^T)$ is a complete Hermitian manifold.
\end{lemma}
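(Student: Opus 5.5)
The plan is to view $g^T$ as a Riemannian metric on the real manifold $TM$ and verify the Hopf--Rinow criterion: every closed bounded subset of $(TM,g^T)$ is compact. In the real coordinates $(\xi^1,\ldots,\xi^{2n})$ of \eqref{e3.1}, the Riemannian metric underlying \eqref{e3.4} is, up to a harmless positive constant,
\[
\sum_{i,j}(g_{ij}\circ\pi)\bigl(d\xi^i d\xi^j+d\xi^{n+i}d\xi^{n+j}\bigr).
\]
Two facts follow from this form.

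\emph{First}, since $d\pi$ kills $\partial/\partial\xi^{n+i}$, for every tangent vector $v$ to $TM$ we have $|d\pi(v)|_g\le c\,|v|_{g^T}$. Hence $\pi$ is Lipschitz and maps metric balls of $TM$ into metric balls of $M$.

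\emph{Second}, let $c(t)=(x(t),y(t))$ be a curve in $TM$, where $y(t)\in T_{x(t)}M$ has components $y^i=\xi^{n+i}$. The connection $\nabla$ has vanishing Christoffel symbols in affine coordinates, so $\dot y^i$ are exactly the components of $\nabla_{\dot x}y$, which is globally meaningful. The length of $c$ therefore dominates both $\int|\dot x|_g\,dt$ and $\int|\nabla_{\dot x}y|_g\,dt$, up to constants.

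Now fix a base point $(p_0,0)$ and a bounded set $B\subset TM$ with every point at $g^T$-distance at most $R$ from it. By the first fact, $\pi(B)$ lies in the $g$-ball of radius $cR$ about $p_0$. Since $(M,g)$ is complete and connected, the closure of this ball is a compact set $K$.

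To control the fibre directions, take $(p,y)\in B$ and a curve $c$ of length at most $2R$ joining $(p_0,0)$ to it. The base curve $x(t)$ has length at most $2cR$, so it stays inside a slightly larger compact set $K'$. On $K'$ the tensor $\nabla g$ is bounded: $|(\nabla g)(X,Y,Y)|\le C|X|_g|Y|_g^2$. Differentiating $g(y,y)$ along $c$ gives
\[
\frac{d}{dt}|y|_g^2=(\nabla_{\dot x}g)(y,y)+2g(\nabla_{\dot x}y,y),
\]
and hence
\[
\frac{d}{dt}|y|_g\le \tfrac{C}{2}|\dot x|_g|y|_g+|\nabla_{\dot x}y|_g .
\]
Gronwall's inequality together with $y(0)=0$ then yields $|y|_g\le A$, where $A$ depends only on $R$, $C$ and $c$.

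It follows that $B$ is contained in $\{(q,w): q\in K',\ |w|_g\le A\}$. This set is compact, since it is a closed disc bundle over a compact set, and $g_q$ is positive definite and continuous in $q$. Therefore every closed bounded subset of $TM$ is compact, and Hopf--Rinow gives completeness of $(TM,g^T)$. Note that only completeness of $g$ and flatness of $\nabla$ are used; the Hessian condition is not needed, consistent with the statement saying merely Hermitian.

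The main obstacle is the fibre estimate. Unlike a Sasaki metric built from the Levi-Civita connection, the horizontal distribution here comes from $\nabla$, which does not preserve $g$. Consequently fibre lengths can distort along horizontal motion, and a simple isometry argument is unavailable. The Gronwall bound above, which uses local boundedness of $\nabla g$ on compact subsets of $M$, is the step I expect to need the most care.
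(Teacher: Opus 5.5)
Your proof is correct. The ingredients all hold: the real form $\sum_{i,j}(g_{ij}\circ\pi)(d\xi^i d\xi^j+d\xi^{n+i}d\xi^{n+j})$ of $g^T$, the identification of $\dot y^i$ with the components of $\nabla_{\dot x}y$, the fact that $\pi$ does not increase distances, and the Gronwall bound on $|y|_g$ along curves of length at most $2R$. Together they place every metric ball of $TM$ inside a compact disc bundle, and that is exactly what Hopf--Rinow needs. One cosmetic point: $|y|_g$ need not be differentiable where $y=0$, so run the estimate on $\sqrt{|y|_g^2+\epsilon}$ and let $\epsilon\to 0$.

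The paper gives no argument of its own here, only a reference to Lemma 6.1 of Jiao--Yin. Your write-up therefore supplies a self-contained proof rather than reproducing one from the text.

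The fibre estimate is also easier than you expect if you use flatness more directly. In an affine chart $U$ the coefficients of $g^T$ depend only on $\xi^1,\dots,\xi^n$. So over any $U'$ with $\overline{U'}\subset U$ compact, $g^T$ is uniformly equivalent to the Euclidean metric in $(\xi^1,\dots,\xi^{2n})$, uniformly in the fibre variables. Now take a Cauchy sequence in $TM$. Its projections converge in $M$. The tail, and almost-minimizing curves between tail points, lie in $TU'$ for such a $U'$ around the limit. The fibre coordinates are then Cauchy in $\mathbb{R}^n$, and no Gronwall step is needed.

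Your global argument costs a little more but does not use the $\xi^{n+i}$-independence of the coefficients. It works verbatim for the analogous metric $\pi^*g+K^*g$ built from the connection map $K$ of an arbitrary linear connection. It also gives an explicit bound on the fibre radius of metric balls.
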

\begin{proof}
We can use the same method as in Lemma 6.1 of Jiao-Yin \cite{JY25} to prove this lemma.
\end{proof}
Motivated by the bounded geometry of complex manifold (see Definition 1.1 of \cite{CY80} and Definition 2.1 of \cite{LT20}), we introduce the concept of bounded geometry for affine Riemannian manifolds:
\begin{definition}\label{db2.1}
Let $M$ be a complete affine Riemannian manifold. We say that $M$ has bounded geometry of order $l+\alpha$ if and only if $M$ admits a covering of affine coordinate charts $\{(V,(x^1,\cdots,x^n))\}$ and positive numbers $R,c$ such that:

\hspace*{0.2cm}(i)
\begin{minipage}[t]{0.91\linewidth}
for any $x_0\in M$ there is a coordinate chart $(V,(x^1,\cdots,x^n))$ with $x_0\in V$ and that, with respect to the Euclidean distance $d$ defined by $v^i$-coordinates, $d(x_0,\partial V)>R$;
\end{minipage}

\hspace*{0.1cm}(ii)
\begin{minipage}[t]{0.91\linewidth}
if $(g_{ij})$ denote the metric tensor with respect to $(V,(x^1,\cdots,x^n))$, then the components $g_{ij}$ of $g$ are uniformly bounded in the standard $C^{l+\alpha}$ norm in $(V,(v^1,\cdots,v^n))$ independent of $V$ and $(\delta_{ij})/c<(g_{ij})<c(\delta_{ij})$.
\end{minipage}
\end{definition}
The above definition is useful in Schauder-type estimates for partial differential equations on affine manifolds.
\begin{theorem}\label{t3.6}
Let $(M,\nabla ,g)$ be an affine Riemannian manifold with bounded geometry of order $l+\alpha$, then its tangent bundle $(TM,J_\nabla,g^T)$ has bounded geometry of order $l+\alpha$.
\end{theorem}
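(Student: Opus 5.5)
The plan is to build the charts for $TM$ directly from the affine charts of $M$ supplied by Definition \ref{db2.1}, and then check conditions (i) and (ii) of the complex bounded geometry definition (Definition 1.1 of \cite{CY80}, Definition 2.1 of \cite{LT20}) for $(TM,J_\nabla,g^T)$. Let $\{(V,(x^1,\dots,x^n))\}$ be the covering of $M$ by affine charts, with constants $R,c$. Completeness of $(TM,J_\nabla,g^T)$ is already given by Lemma \ref{l3.4}, so only the chart conditions remain.

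\emph{Step 1: the charts.} A point $p\in TM$ has fiber coordinates $\xi^{n+j}(p)$ that can be arbitrarily large, so the naive chart $\pi^{-1}(V)$ has no uniform radius in the fiber directions in a useful normalized form. To fix this, for each $p\in TM$ choose an affine chart $V\ni x_0=\pi(p)$ with $d(x_0,\partial V)>R$. Use the translated holomorphic coordinates $w^j=z^j-\sqrt{-1}\,\xi^{n+j}(p)$ on $\pi^{-1}(V)\cong V\times\mathbb{R}^n$. A translation in the imaginary directions is holomorphic, so $\{w^j\}$ is still a holomorphic chart, and $p$ corresponds to $(x_0,0)$. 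Let $W_p$ be the domain $\{w: \operatorname{Re} w\in V,\ |\operatorname{Im} w|<2R\}$. Then the Euclidean distance from $p$ to $\partial W_p$ in $w$-coordinates is at least $\min(R,2R)=R$, up to the factor of $\sqrt{2}$ that comes from comparing $\mathbb{C}^n$ with $\mathbb{R}^{2n}$. This gives condition (i) with the constant $R$ (or $R/\sqrt2$), independent of $p$.

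\emph{Step 2: the metric components.} In these coordinates, \eqref{e3.4} gives $g^T_{i\bar j}(w)=g_{ij}(\operatorname{Re} w)$, and this component is independent of $\operatorname{Im} w$. The two-sided bound $(\delta_{ij})/c<(g^T_{i\bar j})<c(\delta_{ij})$ is therefore inherited directly. For the $C^{l+\alpha}$ norm I will use Lemma \ref{l3.1}, which says that derivatives of $g_{ij}\circ\pi$ in $z,\bar z$ are $2^{-k}$ times the corresponding $x$-derivatives composed with $\pi$. Derivatives in the $\xi^{n+j}$ directions vanish. Hölder quotients on $W_p$ reduce to those of $g_{ij}$ on $V$, because $|\operatorname{Re}w-\operatorname{Re}w'|\le|w-w'|$. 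So $\|g^T_{i\bar j}\|_{C^{l+\alpha}(W_p)}\le C\|g_{ij}\|_{C^{l+\alpha}(V)}$, with a constant $C$ depending only on $l$ and $n$. If the cited definition also requires the real $2n\times 2n$ Riemannian components, these are block combinations of $g_{ij}\circ\pi$ with constant coefficients, and the same bounds hold for them.

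\emph{Main obstacle.} The delicate point is Step 1: we must make sure the charts are genuinely holomorphic and cover $TM$ with uniform size despite the non-compact fibers. The imaginary translation resolves this. The only other thing to check is that the precise formulation in \cite{CY80,LT20}, which may use quasi-coordinate maps from a fixed ball rather than honest charts, is satisfied. Here honest charts suffice, since the restriction of $W_p$ to the ball of radius $R$ about $p$ is an injective holomorphic coordinate map, and this is a special case of a quasi-coordinate.
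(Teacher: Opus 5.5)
Your proposal is correct and follows the paper's proof. Both take charts on $TM$ induced by the affine charts of $M$, get completeness from Lemma \ref{l3.4}, and transfer the metric bounds via \eqref{e3.4} and Lemma \ref{l3.1}. The only difference is your imaginary translation and truncation to $|\operatorname{Im} w|<2R$. The paper does not need this: in $z$-coordinates $\pi^{-1}(V)=V\times\mathbb{R}^n$ has boundary $\partial V\times\mathbb{R}^n$, so $d(p,\partial(TV))\ge d(\pi(p),\partial V)>R$ automatically. The ``main obstacle'' you identify is therefore not one, and your modification only buys bounded chart domains, which is harmless.
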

\begin{proof}
By Lemma \ref{l3.4}, $(TM,J_\nabla,g^T)$ is complete. Let $\{(V,(x^1,\cdots,x^n))\}$ be a a covering of affine coordinate charts on $M$. We define a covering of holomorphic coordinate charts on $TM$ by $\{(TV,(z^1,\cdots,z^n))\}$, where $TV$ is the tangent bundle of $V$, and $z^j$ is defined as in \eqref{e3.1}. For any $p\in TM$, there is a coordinate chart $(V,(x^1,\cdots,x^n))$ on $M$ with $\pi(p)\in V$ and $d(\pi(p),\partial V)>R$, then $(TV,(z^1,\cdots,z^n))$ is a holomorphic coordinate chart with $p\in TV$ and $d(p,\partial TV)>R$. By equation \eqref{e3.4} and Lemma \ref{l3.1}, we can prove that the components $g^T_{i\bar{j}}$ of $g^T$ are uniformly bounded in the standard $C^{l+\alpha}$ norm in $(TV,(z^1,\cdots,z^n))$ independent of $TV$ and $(\delta_{ij})/c<g^T_{i\bar{j}}<c(\delta_{ij})$.
\end{proof}

\section{Koszul solitons and Hesse-Einstein metrics}

In this section, we provide the definition of Koszul solitons and investigate their fundamental properties.
\begin{definition}\label{d5.1}
Let $(M,g(t))$ be a solution of the Hesse-Koszul flow \eqref{e1.2}, suppose that $\varphi_t:M\rightarrow M$ is a time-dependent family of affine bijections with $\varphi_0=id$, and $\sigma(t)$ is a time-dependent scale function with $\sigma(0)=1$, if
\begin{equation}\label{e5.1}
g(t)=\sigma(t)\varphi^*_tg(0),
\end{equation}
where $\varphi^*_t$ is the pull back map, then $(M,g(t))$ is called a Koszul soliton.
\end{definition}

Suppose that $(M,g(t))$ is a Koszul soliton, applying $\frac{\partial}{\partial t}$ to equation \eqref{e5.1} and
evaluating at $t=0$, we have
\begin{equation}
\frac{\partial}{\partial t}g(t)=\frac{\partial \sigma(t)}{\partial t}\varphi^*_tg(0)+\sigma(t)\frac{\partial}{\partial t}\varphi^*_tg(0),
\end{equation}
\begin{equation}\label{e5.3}
2\beta(g(0))=\sigma'(0)g(0)+\mathcal{L}_Vg(0),
\end{equation}
where $V=d\varphi_t/dt$ and $\mathcal{L}$ denotes the Lie derivative, the definition and relevant propositions of the Lie derivative can be found in \cite{CLN06}. By writing \eqref{e5.3} in local affine coordinates, we obtain
\begin{equation}\label{e5.4}
2\beta_{ij}=\sigma'(0)g_{ij}+\hat{\nabla}_i V_j+\hat{\nabla}_j V_i,
\end{equation}
where $\hat{\nabla}$ is the Levi-Civita connection of $g(0)$.

Next, we consider the relationship between Koszul solitons and Hesse-Einstein metrics. For a Hessian metric $g$, if $\beta(g)=\lambda g$ for some constant $\lambda$, then $g$ is called a Hesse-Einstein metric. If we set $V=0$ in \eqref{e5.4}, then $g(0)$ is a Hesse-Einstein metric.
\begin{theorem}\label{t5.2}
Let $M$ be an affine manifold with a Hesse-Einstein metric $g_0$, then there exists a Koszul soliton $(M,g(t))$ satisfying $g(0)=g_0$.
\end{theorem}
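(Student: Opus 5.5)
We will construct the soliton explicitly, with trivial diffeomorphisms $\varphi_t=\mathrm{id}$ and a scaling $\sigma(t)$ to be determined. Suppose $\beta(g_0)=\lambda g_0$ for a constant $\lambda$, and set
\[
g(t)=\sigma(t)g_0,\qquad \sigma(t)=1+2\lambda t ,
\]
defined for all $t\ge 0$ if $\lambda\ge 0$, and for $0\le t<-1/(2\lambda)$ if $\lambda<0$. Since $\sigma(t)>0$ and $\sigma$ is a positive constant on $M$ for each fixed $t$, the tensor $g(t)$ is a Riemannian metric. By Proposition \ref{p2.3} it is still Hessian, because the symmetry condition $\partial_k g_{ij}=\partial_i g_{kj}$ is preserved under multiplication by a constant. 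Moreover $\varphi_t=\mathrm{id}$ is an affine bijection with $\varphi_0=\mathrm{id}$, and $\sigma(0)=1$. Hence \eqref{e5.1} holds by construction, and it remains only to check that $g(t)$ solves \eqref{e1.2}.

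The key step is that the second Koszul form is invariant under constant rescaling of the metric. In local affine coordinates,
\[
\log\det[\sigma g_{pq}]=n\log\sigma+\log\det[g_{pq}],
\]
and the first term is constant in $x$. It is therefore annihilated by $\partial^2/\partial x^i\partial x^j$, which gives
\[
\beta(g(t))=\beta(g_0)=\lambda g_0 .
\]
On the other hand,
\[
\frac{\partial}{\partial t}g(t)=\sigma'(t)g_0=2\lambda g_0=2\beta(g(t)),
\]
and $g(0)=g_0$. So $g(t)$ is a solution of the Hesse-Koszul flow of the form \eqref{e5.1}, that is, a Koszul soliton. This is consistent with \eqref{e5.4}, where $V=0$ and $\sigma'(0)=2\lambda$.

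There is no real obstacle in this argument. The only points needing care are the following:
\begin{itemize}
\item the existence interval must be restricted when $\lambda<0$, so that $\sigma(t)$ stays positive and $g(t)$ remains positive definite;
\item the formula for $\beta$ must be independent of the choice of affine chart, so that the local computation is global. This holds because $\beta$ is a well-defined tensor by Definition \ref{Kos}.
\end{itemize}
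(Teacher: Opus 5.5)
Your proof is correct and matches the paper's argument: the paper also sets $g(t)=(1+2\lambda t)g_0$ (implicitly with $\varphi_t=\mathrm{id}$) and uses the invariance of $\beta$ under constant rescaling to get $\partial_t g(t)=2\lambda g_0=2\beta(g(t))$. Your restriction to $0\le t<-1/(2\lambda)$ when $\lambda<0$, which keeps $g(t)$ positive definite, is a detail the paper leaves implicit.
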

\begin{proof}
By the definition of Hesse-Einstein metric, we have $\beta(g_0)=\lambda g_0$ for some constant $\lambda$. Define $g(t):=(1+2\lambda t)g_0$, note that if $g_0$ is multiplied by a positive constant, the corresponding second Koszul form is invariant, then we have
\begin{equation}
\frac{\partial}{\partial t}g(t)=2\lambda g_0=2\beta(g_0)=2\beta(g(t))
\end{equation}
and $g(0)=g_0$.
\end{proof}
According to the above discussion, by associating the initial metric $g_0$ and the soliton $g(t)$, Hesse-Einstein metrics can be regarded as a special type of Koszul soliton. Let $g_0$ be the Hesse-Einstein metric as in the proof of Theorem \ref{t5.2} ($\beta(g)=\lambda g$), the soliton $g_0$ is called shrinking if $\lambda<0$, static if $\lambda=0$, and expanding if $\lambda>0$.
\begin{theorem}\label{t5.3}
Let $(M,g(t))$ be a Koszul soliton, then $(TM,g^T(t/4))$ be a K{\"a}hler-Ricci soliton.
\end{theorem}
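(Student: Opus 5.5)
The plan is to verify directly the two ingredients of a K\"ahler--Ricci soliton for the family $h(t):=g^T(t/4)$ on $(TM,J_\nabla)$. The first is that $h(t)$ is a family of K\"ahler metrics solving the K\"ahler--Ricci flow $\frac{\partial}{\partial t}h_{i\bar j}=-R_{i\bar j}(h)$. The second is that $h(t)$ equals a time-dependent scaling of pullbacks of $h(0)$ by biholomorphisms of $TM$. I use the normalization of the flow in which the Ricci form appears with coefficient $-1$; this is the one that fits Proposition \ref{p3.3}, and it dictates the time rescaling $t/4$.

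\emph{Step 1 (flow equation).} By Mirghafouri--Malek, each $g(t)$ is Hessian, so Proposition \ref{p3.2} shows that every $h(t)=g(t/4)^T$ is K\"ahler on $(TM,J_\nabla)$. In a holomorphic chart $(z^1,\dots,z^n)$ as in \eqref{e3.1}, we have $h_{i\bar j}(t)=g_{ij}(t/4)\circ\pi$. Hence, using \eqref{e1.2},
\[
\frac{\partial}{\partial t}h_{i\bar j}=\frac14\Big(\frac{\partial g_{ij}}{\partial s}\Big|_{s=t/4}\Big)\circ\pi=\frac12\,\beta_{ij}\big(g(t/4)\big)\circ\pi .
\]
Proposition \ref{p3.3}, applied to the Hessian metric $g(t/4)$, identifies the right-hand side with $-R^T_{i\bar j}(h(t))$. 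So $h$ solves the K\"ahler--Ricci flow with $h(0)=g_0^T$.

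\emph{Step 2 (self-similarity).} Write $\psi_t:=\varphi_{t/4}$ and define $\Phi_t:=d\psi_t:TM\to TM$. In affine coordinates we have $\psi_t(x)=Ax+b$ for an invertible real matrix $A$. Then $\Phi_t(x,\xi)=(Ax+b,A\xi)$, so in the coordinates \eqref{e3.1} it is $z\mapsto Az+b$. Thus $\Phi_t$ is a biholomorphism of $(TM,J_\nabla)$ with $\pi\circ\Phi_t=\psi_t\circ\pi$ and $\Phi_t^*dz^k=A^k_i\,dz^i$. From this I would check the naturality identity $\Phi_t^*(g^T)=(\psi_t^*g)^T$:
\[
\Phi_t^*g^T=(g_{kl}\circ\pi\circ\Phi_t)A^k_iA^l_j\,dz^id\bar z^j=\big((\psi_t^*g)_{ij}\circ\pi\big)dz^id\bar z^j .
\]
Combining this with \eqref{e5.1} at time $t/4$ gives
\[
h(t)=\big(\sigma(t/4)\psi_t^*g_0\big)^T=\sigma(t/4)\,\Phi_t^*h(0),
\]
with $\Phi_0=\mathrm{id}$ and $\sigma(0)=1$. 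This is exactly a self-similar solution of the K\"ahler--Ricci flow, i.e.\ a K\"ahler--Ricci soliton. Differentiating at $t=0$ gives the infinitesimal form: the vector field is the lift of $V/4$, and the constant is $\sigma'(0)/4$. This mirrors \eqref{e5.4}.

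\emph{Expected obstacle.} The delicate point is the bookkeeping of constants and conventions, not the analysis. The factor $\frac14$ must exactly compensate the $\frac12$ in Proposition \ref{p3.3} against the $2$ in \eqref{e1.2}, and the sign convention for the K\"ahler--Ricci flow must be the one making $-R^T_{i\bar j}=\frac12\beta_{ij}\circ\pi$ the driving term. I would also verify that the tangent lift of an affine bijection is globally well defined and holomorphic with respect to $J_\nabla$. This holds because affine transition maps commute with the lift, so the local formula $z\mapsto Az+b$ patches together.
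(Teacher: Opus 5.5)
Your proposal is correct and follows the same route as the paper. Both arguments combine \eqref{e1.2} with Proposition \ref{p3.3} to get $\partial_t g^T_{i\bar j}=-4R^T_{i\bar j}$, rescale time by $1/4$, and lift the affine bijections $\varphi_{t/4}$ to biholomorphisms of $TM$ to transfer the self-similarity \eqref{e5.1}. You also fill in details the paper leaves implicit: the explicit tangent lift $z\mapsto Az+b$, the naturality identity $\Phi_t^*(g^T)=(\psi_t^*g)^T$, and the fact that $g(t)$ remains Hessian so Proposition \ref{p3.3} applies.
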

\begin{proof}
By \eqref{e1.2} and Proposition \ref{p3.3}, we have
\begin{equation}
\left\{ \begin{aligned}
   &\frac{\partial }{\partial t}{g_{i\overline{j}}^T}(t)=-4R^T_{i\overline{j}} (g(t))  \\
      &g^T(0)= g_0^T.
\end{aligned} \right.
\end{equation}
By a parameter change, we get
\begin{equation}\label{e5.7}
\frac{\partial }{\partial t}{g_{i\overline{j}}^T}(t/4)=-R^T_{i\overline{j}} (g(t/4)).
\end{equation}
Let $\varphi_t$ be a family of affine bijections from $M$ to itself as in Definition \ref{d5.1}, then $\varphi_t$ can be extended to a biholomorphic map $\varphi^T_t$ from $TM$ to itself. By \eqref{e5.1} and \eqref{e3.4}, we have
\begin{equation}\label{e5.8}
g^T(t/4)=\sigma(t/4)(\varphi^T_{t/4})^*g^T(0).
\end{equation}
By \eqref{e5.7} and \eqref{e5.8}, we can prove this theorem.
\end{proof}
The above theorem gives the relationship between Koszul solitons and Ricci solitons. It is well-known that the complete K{\"a}hler Einstein metric (which can be regarded as a K{\"a}hler-Ricci soliton) with negative scalar curvature is unique up to scaling, we want to extend this conclusion to Hessian manifolds.
\begin{theorem}\label{t3.9}
Suppose $(M,g)$ and $(M',g')$ are two complete Hessian manifolds such that their second Koszul forms are equal to $K$ times their respective Hessian metrics, where $K$ is a positive constant. Then any affine bijection is an isometry.
\end{theorem}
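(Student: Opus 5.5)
Let $f:M\to M'$ be an affine bijection. I would pass to tangent bundles, apply the uniqueness of complete K\"ahler--Einstein metrics with negative scalar curvature there, and then come back down to $M$.

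\textbf{Step 1: lift to the tangent bundle.} By Proposition \ref{p3.2}, $g^T$ and $g'^T$ are K\"ahler metrics on $(TM,J_\nabla)$ and $(TM',J_{\nabla'})$. By Lemma \ref{l3.4} they are complete. Since $\beta=Kg$ and $\beta'=Kg'$, Proposition \ref{p3.3} gives
\[
R^T_{i\bar j}=-\tfrac{K}{2}\,g^T_{i\bar j},\qquad R'^T_{i\bar j}=-\tfrac{K}{2}\,g'^T_{i\bar j}.
\]
So both are complete K\"ahler--Einstein metrics with the same negative Einstein constant $-K/2$.

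\textbf{Step 2: transport by $f$.} The affine bijection $f$ extends to a biholomorphism $f^T=df:TM\to TM'$. This holds because in affine coordinates $f$ is affine, so $df$ acts on $z^j=\xi^j+\sqrt{-1}\xi^{n+j}$ by a complex-affine map with real coefficients. In these coordinates,
\[
\big((f^T)^*g'^T\big)_{i\bar j}=\big((f^*g')_{ij}\big)\circ\pi .
\]
This follows from \eqref{e3.4} together with the fact that the Jacobian of $f^T$ in $z$ equals the real Jacobian $A$ of $f$. Hence $(f^T)^*g'^T=(f^*g')^T$. This metric is again complete, and it is K\"ahler--Einstein with constant $-K/2$ on $TM$, since $f^*g'$ is Hessian with $\beta(f^*g')=f^*\beta'=Kf^*g'$.

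\textbf{Step 3: uniqueness.} On the complex manifold $TM$ we now have two complete K\"ahler--Einstein metrics $g^T$ and $(f^*g')^T$ with the same negative constant. I would invoke the Yau--Schwarz maximum principle (Cheng--Yau/Yau uniqueness) to get $g^T=(f^*g')^T$.

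Concretely, write $\omega_2=\omega_1+\sqrt{-1}\partial\bar\partial u$ with
\[
u=\log\frac{\det g_2}{\det g_1}\cdot\text{const},
\]
which is a globally defined function on $TM$ because both volume forms are global. The Einstein equations give $(\omega_1+\sqrt{-1}\partial\bar\partial u)^n=e^{cu}\omega_1^n$. The generalized maximum principle on complete manifolds with Ricci bounded below, applied in both directions, then gives $u\equiv 0$.

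Alternatively, one can use the Schwarz lemma twice, once for the identity map in each direction. Yau's Schwarz lemma requires Ricci of the domain bounded below and holomorphic bisectional curvature of the target bounded above by a negative constant. That curvature hypothesis is not available, so the Monge--Amp\`ere maximum principle version is the one I would rely on.

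\textbf{Main obstacle.} The maximum principle argument needs boundedness of $u$, or of $\operatorname{tr}_{\omega_1}\omega_2$, to run. I would handle this using the volume-form/Schwarz-lemma inequality: on a complete K\"ahler manifold with Ricci bounded below, $\omega_2^n/\omega_1^n\le 1$ whenever $\omega_2$ is K\"ahler--Einstein with the same negative constant. This is Yau's volume form comparison, which needs only the scalar curvature of the target bounded above by a negative constant, and that holds here because the metric is Einstein. Applying it symmetrically gives $\det g^T=\det(f^*g')^T$. Then the Einstein equations force $\omega_1=\omega_2$, because the Ricci forms coincide and hence so do the metrics, equal to $-\tfrac{2}{K}\mathrm{Ric}$.

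\textbf{Step 4: descend to $M$.} By Lemma \ref{l3.1} and \eqref{e3.4}, $g^T=(f^*g')^T$ gives $g_{ij}\circ\pi=(f^*g')_{ij}\circ\pi$. Since $\pi$ is surjective, $g=f^*g'$, so $f$ is an isometry.
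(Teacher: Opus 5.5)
Your proposal is correct and follows the paper's route. You lift to the tangent bundles, where Proposition \ref{p3.2}, Lemma \ref{l3.4} and Proposition \ref{p3.3} give complete K\"ahler--Einstein metrics with constant $-K/2$; you extend $f$ to the biholomorphism $f^T$, invoke uniqueness of such metrics, and descend via \eqref{e3.4}. The only difference is that the paper cites Proposition 5.5 of \cite{CY80} directly for the uniqueness step, whereas you unpack it through Yau's volume-form Schwarz lemma applied to the identity in both directions; note that this lemma needs the target's Ricci form bounded above by a negative multiple of its metric, not merely its scalar curvature, but the Einstein condition supplies exactly that.
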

\begin{proof}
By Proposition \ref{p3.2} and Lemma \ref{l3.4}, the tangent bundle $(TM,g^T)$ and $(TM',g^T)$ are two complete K{\"a}hler manifolds. By Proposition \ref{p3.3}, the Ricci forms of $(TM,g^T)$ and $(TM',g^T)$ are equal to $-\frac{K}{2}$ times their respective K{\"a}hler forms. Let $f$ be a affine bijection from $(M,g)$ to $(M',g')$, then $f$ can be extended to a biholomorphic map $f^T$ from $(TM,g^T)$ to $(TM',g^T)$. By Proposition 5.5 of \cite{CY80}, we obtain that $f^T$ is an isometry, then $f$ is an isometry.
\end{proof}
\begin{theorem}\label{t3.10}
The complete Hesse-Einstein metric with positive second Koszul form are unique up to scaling.
\end{theorem}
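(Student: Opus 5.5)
The plan is to deduce this directly from Theorem \ref{t3.9}, applied with $(M',g')=(M,g')$ and the identity map as the affine bijection. Fix an affine manifold $(M,\nabla)$ and let $g_1,g_2$ be two complete Hessian metrics on $(M,\nabla)$ with $\beta(g_1)=\lambda_1 g_1$ and $\beta(g_2)=\lambda_2 g_2$, where $\lambda_1,\lambda_2>0$. The claim to prove is that $g_2=c\,g_1$ for some positive constant $c$.

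\emph{Step 1: normalize the constants.} As observed in the proof of Theorem \ref{t5.2}, $\beta$ is invariant under multiplication of the metric by a positive constant, because $\log\det[c\,g_{pq}]$ differs from $\log\det[g_{pq}]$ by a constant, which the second derivatives kill. Set $\tilde g_2=(\lambda_2/\lambda_1)g_2$. Then
\[\beta(\tilde g_2)=\beta(g_2)=\lambda_2 g_2=\lambda_1\tilde g_2.\]
A constant rescaling keeps $\tilde g_2$ Hessian and complete. So $g_1$ and $\tilde g_2$ are both complete Hessian metrics whose second Koszul forms equal $K$ times the metric, with the same constant $K=\lambda_1>0$.

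\emph{Step 2: apply Theorem \ref{t3.9}.} Take $(M,g)=(M,g_1)$ and $(M',g')=(M,\tilde g_2)$, with the same flat connection $\nabla$. The identity map is an affine bijection, since in any affine coordinate system it is the identity. Theorem \ref{t3.9} then says that $\mathrm{id}:(M,g_1)\to(M,\tilde g_2)$ is an isometry, that is, $g_1=\tilde g_2=(\lambda_2/\lambda_1)g_2$. This is the asserted uniqueness up to scaling.

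\emph{Main obstacle.} The only step that needs care is the hypothesis matching in Theorem \ref{t3.9}: it requires the same constant $K$ for both metrics, which is why the normalization in Step 1 is needed. One should also note that "unique" is meant on a fixed affine manifold $(M,\nabla)$; with that reading the identity is the natural affine bijection. Positivity of $\beta$ enters only through Theorem \ref{t3.9}. There, via Proposition \ref{p3.3}, it gives K\"ahler--Einstein metrics on $TM$ with negative Ricci curvature, and the Cheng--Yau uniqueness result applies to those. I would also check that completeness of $g^T$ follows from Lemma \ref{l3.4}; this is already used inside Theorem \ref{t3.9}, so it requires nothing new here.
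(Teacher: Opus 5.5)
Your proposal is correct and is essentially the paper's own proof. The paper likewise rescales $g'$ by $K'/K$, using the scale-invariance of $\beta$, so that both metrics have the same Einstein constant $K>0$, and then applies Theorem \ref{t3.9} to the identity map to conclude $g=\frac{K'}{K}g'$.
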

\begin{proof}
Let $g$ and $g'$ be two complete Hessian metrics on $M$ such that
\begin{equation}
\beta(g)=Kg,\;\;\;\;\beta(g')=K'g'
\end{equation}
for two positive constants $K$ and $K'$. Note that if $g'$ is multiplied by a positive constant, the corresponding second Koszul form is invariant, then
\begin{equation}
\beta\Big(\frac{K'}{K}g'\Big)=K'g'=K\Big(\frac{K'}{K}g'\Big).
\end{equation}
Consider the identity map from $M$ to itself, then by Theorem \ref{t3.9}, we have
\begin{equation}
g=\frac{K'}{K}g'.
\end{equation}
\end{proof}

\section{Examples for Koszul solitons}

By Theorem \ref{t5.2}, we obtain that Hesse-Einstein metrics can generate Koszul solitons. In this section, we give some examples of this type of solitons.

The following example is a shrinking Koszul soliton.
\begin{example}
Let $(\mathbb{R}^n,g=\nabla d\varphi)$ be a Hessian manifold, where
\[\varphi=\log\Big(1+\sum_{i=1}^n e^{x^i}\Big).\]
Hence
\[\det[g_{ij}]=\frac{e^{x^1}\cdots e^{x^n}}{f^{n+1}},\;\;\;{\rm where}\;f=1+\sum_{i=1}^n e^{x^i}.\]
Then we have
\[\alpha=\frac{1}{2}\Big(\sum_{i=1}^n dx^i-(n+1)d\log f\Big),\]
\[\beta=\nabla \alpha=-\frac{n+1}{2}\nabla d\log f=-\frac{n+1}{2}g.\]
The above Hesse-Einstein metric can be found in \cite{r11}. Finally, we can define a shrinking Koszul soliton $g(t):=(1-(n+1)t)g$.
\end{example}
The following two examples \ref{ex6.2} and \ref{ex6.3} are static Koszul solitons.
\begin{example}\label{ex6.2}
Let $(\mathbb{R}^n,g=\nabla d\varphi)$ be a Hessian manifold, where
\[\varphi=\sum_{i=1}^n(x^i)^2.\]
Hence $\det[g_{ij}]=1$ and $\beta=0$, we can define a static Koszul soliton $g(t):=g$.
\end{example}
\begin{example}\label{ex6.3}
Let $M$ be a compact Hessian manifold. Suppose that $M$ carries a parallel volume. Then $M$ admits a Riemannian flat metric $g$ (see \cite{CY82}). Hence $\beta(g)=0$ and we can define a static Koszul soliton $g(t):=g$.
\end{example}
The following two examples \ref{ex6.4} and \ref{ex6.5} are expanding Koszul solitons.
\begin{example}\label{ex6.4}
Let $(\Omega,g=\nabla d\varphi)$ be a Hessian manifold, where
\[\Omega=\Big\{x\in\mathbb{R}^n|x^n>\frac{1}{2}\sum_{i=1}^{n-1}(x^i)^2\Big\}\]
and
\[\varphi=-\log\Big(x^n-\frac{1}{2}\sum_{i=1}^{n-1}(x^i)^2\Big).\]
Hence
\[\det[g_{ij}]=f^{-n-1},\;\;\;{\rm where}\;f=x^n-\frac{1}{2}\sum_{i=1}^{n-1}(x^i)^2.\]
Then
\[\alpha=-\frac{n+1}{2}d\log f=\frac{n+1}{2}d\varphi\]
and
\[\beta=\frac{n+1}{2}\nabla d\varphi=\frac{n+1}{2}g.\]
The above Hesse-Einstein metric can be found in \cite{r11}. Finally, we can define a expanding Koszul soliton $g(t):=(1+(n+1)t)g$.
\end{example}

\begin{example}\label{ex6.5}
Let $(M,g)$ be a compact Hessian manifold. Suppose $\beta(g)$ is positive definite. Then there exists a Hessian metric $\tilde{g}$ on $M$ such that $\beta(\tilde{g})=\tilde{g}$ (see \cite{CY82}). Hence we can define a expanding Koszul soliton $\tilde{g}(t):=(1+2t)\tilde{g}$.
\end{example}

\section{PDEs related to Koszul solitons}

In this section, we consider the partial differential equations related to Koszul solitons.

Let $(M,g(t))$ be a Koszul soliton, then $g(t)$ satisfies \eqref{e1.2} and $g(0)$ satisfies \eqref{e5.3}. The Hesse-Koszul flow \eqref{e1.2} can be rewritten as a parabolic Monge-Amp\`{e}re equation:
\begin{equation}
\label{en4.1}
\left\{ \begin{aligned}
   &\frac{\partial u }{\partial t}=\log \frac{\det ({{g}_{0}}+2t\beta ({{g}_{0}})+\nabla du )}{\det ({{g}_{0}})},  \\
   &{{g}_{0}}-t\beta ({{g}_{0}})+\nabla du >0,  \\
   &u (0)=0.  \\
\end{aligned} \right.
\end{equation}
where $u$ is an unknown scalar function. Indeed, if $g(t)$ is a solution to \eqref{e1.2}, we define $u(x,t)$ by
\[u(x,t)=\int_{0}^{t}{\log \frac{\det g(x,s)}{\det {{g}_{0}}(x)}}ds.\]
Then
\[\frac{\partial u}{\partial t}=\log \frac{\det g(x,t)}{\det {{g}_{0}}(x)},\ \ u(x,0)=0\]
By direct calculations, we have
\[\frac{\partial}{\partial t}(g(t)-g_0-2t\beta(g_0)-\nabla du)=0\]
and
\[(g(t)-g_0-2t\beta(g_0)-\nabla du)|_{t=0}=0.\]
Hence $g(t)=g_0+2t\beta(g_0)+\nabla d\varphi>0$ and $u$ satisfies \eqref{en4.1}.

Conversely, given $u(t)$ solving \eqref{en4.1}, we have $g(t):={g}_{0}+2t\beta ({{g}_{0}})+\nabla du$ satisfies
\[\frac{\partial }{\partial t}g(t)=2\beta ({{g}_{0}})+\nabla d\log \frac{\det (g(t))}{\det ({{g}_{0}})}=2\beta ({{g}_{t}})\]
and $g(0)=g_0$. It follows that $g(t)$ is a solution to \eqref{e1.2}.

Next, we consider the equation \eqref{e5.3} with $V=0$ and $\sigma'(0)=K$, where $K$ is a constant. Suppose that there exists a function $\hat{h}$ defined on $M$ such that
\begin{equation}\label{en4.2}
2\beta(g(0))=Kg(0)+\nabla d\hat{h}.
\end{equation}
Then if $u$ is a function defined on $M$ and satisfies the following Monge-Amp\`{e}re equation:
\begin{equation}\label{en4.3}
\det(g(0)_{ij}+u_{ij})=e^{Ku-\hat{h}}\det(g(0)_{ij}),
\end{equation}
we have
\begin{equation}\label{en4.4}
\begin{aligned}
\beta_{ij}(g(0)+\nabla du)&=\frac{1}{2}\frac{\partial^2}{\partial x^i\partial x^j}[\log \det(g(0)_{pq}+u_{pq})]\\
&=\frac{1}{2}\frac{\partial^2}{\partial x^i\partial x^j}[Ku-\hat{h}+\log \det(g(0))]\\
&=\frac{K}{2}(g(0)+\nabla du).
\end{aligned}
\end{equation}
It follows that $g(0)+\nabla du$ is a Hesse-Einstein metric, and $g(t)=(1+Kt)(g(0)+\nabla du)$ is a Koszul soliton.

\textbf{Remark}: Let $(M,g)$ be a Hessian manifold with $\beta(g)>0$, then
\begin{equation}
2\beta(\beta(g))=2\beta(g)+\nabla d\log\frac{\det(\beta(g))}{\det g}.
\end{equation}
Set $g(0)=\beta(g)$, we conclude that $g(0)$ satisfies \eqref{en4.2}. Similarly, if $\beta(g)<0$, we can take $g(0)=-\beta(g)$.

Next, we generalize equations \eqref{en4.1} and \eqref{en4.3} to general fully nonlinear equations. Let $(M, \nabla, g)$ be an affine Riemannian manifold of dimension $n$. Given a $(0,2)$-tensor $\chi$ on $(M,g)$, for function $u\in C^2(M,\mathbb{R})$ or $u\in C^2(M\times \mathbb{R},\mathbb{R})$, we have a new $(0,2)$-tensor $\alpha=\chi+\nabla du$, and we can define $A_j^i=g^{ip}\alpha_{jp}$. Consider the equations for $u$ as follows:
\begin{equation}\label{e4.1}
F(A)=h(x,u),\;\;\;\;{\rm if}\;u\in C^2(M,\mathbb{R})
\end{equation}
\begin{equation}\label{e4.2}
F(A)-u_t=h(x,u),\;\;\;\;{\rm if}\;u\in C^2(M\times \mathbb{R},\mathbb{R})
\end{equation}
for a given function $h$ on $M$, where
\begin{equation}
F(A)=f(\lambda_1,\ldots,\lambda_n)
\end{equation}
is a smooth symmetric function of the eigenvalues of $A$ and $f_i>0$ for all $i$. Equations \eqref{e4.1} and \eqref{e4.2} are well-defined, i.e., independent of the choice of affine coordinate system, where \eqref{e4.1} is an elliptic equation and \eqref{e4.2} is a parabolic equation. By taking $\chi=g(0)$, $f(\lambda_1,\ldots,\lambda_n)=\log\lambda_1\ldots\lambda_n$ and $h(x,u)=Ku-\hat{h}$, we find that \eqref{en4.3} is a special case of \eqref{e4.1}. By taking $\chi=g(0)+2t\beta ({{g}_{0}})$, $f(\lambda_1,\ldots,\lambda_n)=\log\lambda_1\ldots\lambda_n$ and $h(x,u)=0$, we find that \eqref{en4.1} is a special case of \eqref{e4.2}.
\begin{theorem}\label{t4.1}
Let $(M, \nabla, g)$ be an affine Riemannian manifold of dimension $n$. If $u\in C^2(M,\mathbb{R})$ satisfies \eqref{e4.1}, then $u\circ\pi$ satisfies
\begin{equation}
F((g^T)^{i\bar{p}}(\chi^T_{j\bar{p}}+4(u\circ\pi)_{j\bar{p}}))=h(\pi(x),u\circ\pi)
\end{equation}
on $TM$. If $u\in C^2(M\times \mathbb{R},\mathbb{R})$ satisfies \eqref{e4.2}, then $u\circ\pi:=u(\pi(\cdot),\cdot)$ satisfies
\begin{equation}\label{e4.5}
F((g^T)^{i\bar{p}}(\chi^T_{j\bar{p}}+4(u\circ\pi)_{j\bar{p}}))-(u\circ\pi)_t=h(\pi(x),u\circ\pi)
\end{equation}
on $TM\times \mathbb{R}$, where $\chi^T$ is defined similarly to $g^T$.
\end{theorem}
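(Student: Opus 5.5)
The plan is a pointwise computation on $TM$: show that, in the holomorphic chart $\{z^1,\ldots,z^n\}$ of \eqref{e3.1}, the endomorphism $(g^T)^{i\bar p}(\chi^T_{j\bar p}+4(u\circ\pi)_{j\bar p})$ is the pullback by $\pi$ of $A^i_j=g^{ip}(\chi_{jp}+u_{jp})$. Then both sides of \eqref{e4.1} (resp. \eqref{e4.2}) simply pull back. Fix a point $p\in TM$, an affine chart $(V,(x^1,\ldots,x^n))$ around $\pi(p)$, and the induced chart $(TV,(z^1,\ldots,z^n))$.

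\textbf{Step 1 (metric and $\chi$).} By \eqref{e3.4}, $g^T_{j\bar p}=g_{jp}\circ\pi$. Hence the inverse matrix is $(g^T)^{i\bar p}=g^{ip}\circ\pi$, since inversion commutes with composition by $\pi$. In the same way, $\chi^T_{j\bar p}=\chi_{jp}\circ\pi$. Note that we need $\chi$ symmetric, or at least we should read $\chi^T$ as built from the same index order, for this identification to be unambiguous.

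\textbf{Step 2 (the Hessian term).} By Lemma \ref{l3.1} with $|\alpha|=|\beta|=1$, we get $(u\circ\pi)_{j\bar p}=\tfrac14 u_{jp}\circ\pi$. Hence $4(u\circ\pi)_{j\bar p}=u_{jp}\circ\pi$, which explains the factor $4$ in the statement. Combining with Step 1 gives
\[
(g^T)^{i\bar p}\big(\chi^T_{j\bar p}+4(u\circ\pi)_{j\bar p}\big)=\big(g^{ip}(\chi_{jp}+u_{jp})\big)\circ\pi=A^i_j\circ\pi .
\]
Since $F(A)=f(\lambda_1,\ldots,\lambda_n)$ depends only on the eigenvalues, $F$ of the left-hand matrix at $p$ equals $F(A)(\pi(p))$. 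By \eqref{e4.1} this is $h(\pi(p),u(\pi(p)))=h(\pi(p),u\circ\pi(p))$, which proves the elliptic case.

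\textbf{Step 3 (parabolic case).} Here $u=u(x,t)$ and $u\circ\pi(q,t)=u(\pi(q),t)$. The $z$-derivatives act only on the spatial variable, so Lemma \ref{l3.1} applies for each fixed $t$. Also $(u\circ\pi)_t=u_t\circ\pi$. Evaluating \eqref{e4.2} at $(\pi(q),t)$ then gives \eqref{e4.5}.

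\textbf{Main concern.} The only point needing care is well-definedness: the expression must be independent of the chosen holomorphic chart. We handle this by working only in the induced charts $(TV,z)$. Changes between such charts come from affine changes of $x$, which are complex-affine in $z$. Under these, the Hermitian tensors $g^T$, $\chi^T$ and $\partial\bar\partial(u\circ\pi)$ transform tensorially, so the eigenvalues of the mixed endomorphism are chart-independent. This matches the paper's remark that \eqref{e4.1} is independent of the affine coordinate system. With this checked, the theorem reduces entirely to Lemma \ref{l3.1} and \eqref{e3.4}.
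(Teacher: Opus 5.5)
Your proposal is correct and follows essentially the same route as the paper. It uses \eqref{e3.4} and Lemma~\ref{l3.1} (giving $4(u\circ\pi)_{j\bar p}=u_{jp}\circ\pi$) to identify the mixed endomorphism on $TM$ with $A\circ\pi$, then pulls \eqref{e4.1} and \eqref{e4.2} back along $\pi$. Your extra check that the expression is chart-independent, because the induced transition maps $z'=Az+b$ are complex-affine with real $A$, is a point the paper leaves implicit, but it does not change the argument.
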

\begin{proof}
We take the proof of \eqref{e4.5} as an example. By Lemma \ref{l3.1} and equation \eqref{e4.2}, we have
\begin{equation}
\begin{aligned}
&F((g^T)^{i\bar{p}}(\chi^T_{j\bar{p}}+4(u\circ\pi)_{j\bar{p}}))-(u\circ\pi)_t\\
=&F((g^{ip}\circ\pi)(\chi_{jp}\circ\pi+u_{jp}\circ\pi))-(u\circ\pi)_t\\
=&F\circ\pi-u_t\circ\pi\\
=&h\circ\pi\\
=&h(\pi(x),u\circ\pi)
\end{aligned}
\end{equation}
\end{proof}
The above theorem is useful for studying elliptic and parabolic equations on affine manifolds. For example, we can derive the a priori estimates and some properties of solutions for equations on affine manifolds from the equations on complex manifolds.
\begin{example}
For Hessian manifold $(M, \nabla, g)$, let $\chi=g$, $f(\lambda_1,\ldots,\lambda_n)=\log\lambda_1\ldots\lambda_n$ and $h(x,u)=\hat{h}(x)+u$, where $\hat{h}(x)$ is a smooth function on $M$, then \eqref{e4.1} can be written as:
\begin{equation}\label{e4.7}
\det(g_{ij}+u_{ij})=e^{\hat{h}+u}\det(g_{ij}).
\end{equation}
Such equations are called the real Monge-Amp\`{e}re equations, they are useful in affine differential geometry, see \cite{CY82}, and this equation is related to a class of Koszul solitons, see \eqref{en4.3} and \eqref{en4.4}. If \eqref{e4.7} admits a solution, then by Theorem \ref{t4.1}, the complex Monge-Amp\`{e}re equation
\begin{equation}\label{e4.8}
\det(g^T_{i\bar{j}}+4v_{i\bar{j}})=e^{\hat{h}\circ\pi+v}\det(g^T_{i\bar{j}})
\end{equation}
has a solution $v=u\circ\pi$ on $TM$. Assume that $M$ has bounded geometry, $\hat{h}$ and $u$ belong to appropriate Banach spaces (see \cite{CY80}), then by Theorem \ref{t3.6}, we can establish the a priori estimates for \eqref{e4.7} from the estimates for \eqref{e4.8} given in \cite{CY80}.
\end{example}

\section{A rigidity theorem for affine functions}

In the previous sections, we use the relationship between affine manifolds and complex manifolds to study Koszul solitons. In this section, we present some other geometric applications of these methods on Hessian manifolds. It is obvious that Bounded affine functions on $\mathbb{R}^n$ are constant. In the following, we attempt to generalize this to Hessian manifolds.
\begin{theorem}\label{t3.7}
Let $(M,g_M)$ be a complete Hessian manifold with the second Koszul form bounded from above by $K_1$ (i.e. $\beta(g_M)\leq K_1g_M$). Let $(N,g_N)$ be another affine Riemannian manifold with the holomorphic bisectional curvature (see \cite{Yau78}) of its tangent bundle $(TN,g_N^T)$ bounded from above by a negative constant $K_2$. Then if there is a non-constant affine map $f$ from $M$ to $N$, we have $K_1\geq 0$ and the pull back metric $f^*g_N$ satisfies:
\begin{equation}\label{e3.8}
f^*g_N\leq -\frac{K_1}{2K_2}g_M.
\end{equation}
In particular, if $K_1\leq 0$, every affine map from $M$ into $N$ is constant.
\end{theorem}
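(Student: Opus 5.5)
The strategy is to transfer the problem to the tangent bundles and apply Yau's Schwarz lemma \cite{Yau78}.

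\textbf{Step 1: transfer to the tangent bundles.} By Proposition \ref{p3.2} and Lemma \ref{l3.4}, $(TM,J_\nabla,g_M^T)$ is a complete K\"ahler manifold. By Proposition \ref{p3.3}, its Ricci tensor is
\[R^T_{i\bar j}=-\tfrac12\beta_{ij}\circ\pi .\]
The hypothesis $\beta(g_M)\le K_1 g_M$ then gives
\[\mathrm{Ric}(g_M^T)\ge -\tfrac{K_1}{2}\,g_M^T .\]
The comparison is made in the coordinates $z^i$ of \eqref{e3.1}, where $g^T_{i\bar j}=g_{ij}\circ\pi$.

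\textbf{Step 2: extend the affine map.} The affine map $f:M\to N$ is linear in affine coordinates up to a translation. Hence it extends to a holomorphic map $f^T=df:TM\to TN$. In the coordinates $z^j=\xi^j+\sqrt{-1}\,\xi^{n+j}$ on both sides, $f^T$ is given by the same complex-affine formula. By \eqref{e3.4} we get
\[(f^T)^*g_N^T=\sum_{i,j}\big((f^*g_N)_{ij}\circ\pi\big)\,dz^i d\bar z^j=(f^*g_N)^T .\]
Moreover, $f^T$ is nonconstant whenever $f$ is.

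\textbf{Step 3: apply the Schwarz lemma.} Yau's Schwarz lemma states the following. Let the domain be complete K\"ahler with Ricci curvature bounded below by $-k_1\le 0$ (here $k_1=K_1/2$). Let the target be Hermitian with holomorphic bisectional curvature bounded above by $K_2<0$. Then every holomorphic map $F$ satisfies
\[F^*h\le \frac{k_1}{-K_2}\,g .\]
If $k_1<0$, the Schwarz lemma forces $F$ to be constant; this gives $K_1\ge0$ whenever $f$ is nonconstant. Applied to $F=f^T$, the lemma yields
\[(f^*g_N)^T\le -\frac{K_1}{2K_2}\,g_M^T .\]

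\textbf{Step 4: descend back to $M$.} Both sides of the last inequality are of the form $h^T$ for symmetric tensors $h$ on $M$. The inequality $h^T\ge0$ as a Hermitian form is equivalent to $h\ge0$: evaluate on real vectors $\sum a^i\partial_{z^i}$ with $a\in\mathbb{R}^n$. This gives \eqref{e3.8}. If $K_1\le0$, then either $K_1<0$, which contradicts nonconstancy, or $K_1=0$, which gives $f^*g_N=0$. In the latter case $df=0$, so $f$ is constant, using the connectedness implicit in completeness.

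\textbf{Main obstacle.} The delicate point is the precise form and normalization of Yau's Schwarz lemma for a Hermitian (not necessarily K\"ahler) target. One must check that the constant is $k_1/(-K_2)$ rather than something involving dimensions. This should be fine, because Yau's version for bisectional curvature bounds uses only a holomorphic bisectional bound on the target and a Ricci lower bound on the complete domain, with constant exactly $k_1/|K_2|$. I would also double-check the factor $\tfrac12$ from Proposition \ref{p3.3} and the factor conventions in \eqref{e3.3}. These factors rescale both metrics by the same amount, so the ratio should be unaffected.
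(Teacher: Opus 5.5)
Your proposal is correct and follows the paper's proof essentially step for step. Both arguments use the same chain: $(TM,g_M^T)$ is complete K\"ahler by Proposition \ref{p3.2} and Lemma \ref{l3.4}; Proposition \ref{p3.3} gives $\mathrm{Ric}(g_M^T)\ge -\frac{K_1}{2}g_M^T$; $f$ extends to a holomorphic map $f^T$; Yau's Schwarz lemma applies; and the result descends to $M$ via \eqref{e3.4}.

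Your explicit checks that $(f^T)^*g_N^T=(f^*g_N)^T$ and that $h^T\ge 0$ iff $h\ge 0$ simply fill in what the paper calls immediate. One small wording fix: Yau's inequality is asserted only for non-constant maps, not for every holomorphic map, though your next sentence already accounts for this.
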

\begin{proof}
Let $(TM,g^T_M)$ be the tangent bundle of $(M,g_M)$, by Proposition \ref{p3.2} and Lemma \ref{l3.4}, $(TM,g^T_M)$ is a complete K{\"a}hler manifold. By Proposition \ref{p3.3}, we have
\[{\rm Ric}(g^T_M)\geq-\frac{K_1}{2}.\]
The affine map $f$ can be extended to a holomorphic map $f^T$ from $(TM,g^T_M)$ to $(TN,g_N^T)$, by Theorem 2 of Yau \cite{Yau78}, we have $K_1\geq 0$ and
\begin{equation}\label{e3.9}
(f^T)^*g^T_N\leq -\frac{K_1}{2K_2}g^T_M.
\end{equation}
Then \eqref{e3.8} follows immediately from \eqref{e3.9} and \eqref{e3.4}.
\end{proof}
\begin{theorem}\label{t3.8}
Let $M$ be a complete Hessian manifold with non-positive second Koszul form. Then every bounded affine function on $M$ is constant.
\end{theorem}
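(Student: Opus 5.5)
The plan is to apply Theorem \ref{t3.7} with $K_1=0$ and a target manifold $N$ whose tangent bundle has negatively pinched holomorphic bisectional curvature. Let $u$ be a bounded affine function on $M$ with values in $(a,b)$; after an affine change of the target we may assume $u(M)\subset(-1,1)$.

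The target will be the interval $N=(-1,1)\subset\mathbb{R}$ with its standard affine structure. We equip $N$ with a Hessian metric whose tangent bundle is a hyperbolic strip. Concretely, $TN=\{z=\xi+\sqrt{-1}\eta:\ -1<\xi<1\}$ is a vertical strip in $\mathbb{C}$. The Poincar\'e metric of this strip depends only on $\xi=\mathrm{Re}\,z$, since the strip is invariant under vertical translations. It has the form $c\,\mathrm{sec}^2(\pi\xi/2)\,dz\,d\bar z$. We set $g_N=c\,\mathrm{sec}^2(\pi x/2)\,dx^2$ on $N$. In one dimension every metric is Hessian, by Proposition \ref{p2.3}. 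By \eqref{e3.4}, $g_N^T$ is exactly the Poincar\'e metric of the strip. In complex dimension one, holomorphic bisectional curvature is just Gaussian curvature, which is a negative constant $K_2$. A direct check is also possible: by Proposition \ref{p3.3} the Ricci form is $-\tfrac12\beta(g_N)$, and $\beta(g_N)=\tfrac12(\log\sec^2(\pi x/2))''=\tfrac{\pi^2}{4}\sec^2(\pi x/2)$. This is a positive multiple of $g_N$, so the curvature is indeed a negative constant. Note that $N$ need not be complete; Theorem \ref{t3.7} only requires completeness of $M$.

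Now $u:M\to N$ is an affine map, because it is affine in local affine coordinates. The hypothesis $\beta(g_M)\le 0$ gives $\beta(g_M)\le K_1 g_M$ with $K_1=0$. Theorem \ref{t3.7} then says that if $u$ is non-constant, we get $u^*g_N\le 0$. This contradicts $u^*g_N=c\sec^2(\pi u/2)\,du^2$ being nonzero wherever $du\neq0$. The "in particular'' clause of Theorem \ref{t3.7} gives the same conclusion directly. Hence $u$ is constant.

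The only step requiring care is verifying the curvature hypothesis on $(TN,g_N^T)$. The tangent-bundle construction produces a bounded negative curvature only when the Hermitian metric is the genuine Poincar\'e metric of the strip. The fallback is the explicit computation of $\beta(g_N)$ above. In complex dimension one this makes the Gaussian curvature of $g_N^T$ equal to $-2\cdot(\text{ratio }\beta/g)$ up to the normalization factor $4$ from Lemma \ref{l3.1}. That is a negative constant, so some $K_2<0$ works, and its exact value is irrelevant since $K_1=0$.
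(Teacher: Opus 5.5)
Your proof is correct and follows the paper's route. Both proofs apply Theorem \ref{t3.7} with $K_1=0$ to the bounded function, viewed as an affine map into an interval whose tangent bundle is a vertical strip with bisectional curvature bounded above by a negative constant. The only difference is the metric on the interval: the paper uses the ad hoc metric $x^2+4c^2$ on $(-c,c)$ and checks the curvature bound by direct computation, whereas your choice $\frac{\pi^2}{4}\sec^2(\pi x/2)\,dx^2$ makes $g_N$ Hesse--Einstein with $\beta(g_N)=g_N$, so $g_N^T$ is the Poincar\'e metric of the strip and its constant negative curvature follows at once from Proposition \ref{p3.3}.
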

\begin{proof}
Let $f$ be a bounded affine function on $M$ such that $-c<f<c$ for some constant $c>0$. Note that $(-c,c)$ is an affine manifold, we define a Riemannian metric $g(x)=x^2+4c^2$ on $(-c,c)$ for any $x\in (-c,c)$. Consider the tangent bundle $(T(-c,c),g^T)$ of $((-c,c),g)$, by Lemma \ref{l3.1}, we have
\begin{equation}
\begin{aligned}
R_{1\bar{1}1\bar{1}}&=-\frac{\partial^2g^T}{\partial z^2}+(g^T)^{-1}\frac{\partial g^T}{\partial z}\frac{\partial g^T}{\partial \bar{z}}\\
&=-\frac{1}{4}\frac{\partial^2g}{\partial x^2}\circ\pi+\frac{1}{4}g^{-1}\Big(\frac{\partial g}{\partial x}\Big)^2\circ\pi,
\end{aligned}
\end{equation}
where $R_{1\bar{1}1\bar{1}}$ is the curvature tensor of $(T(-c,c),g^T)$. Denote the holomorphic bisectional curvature of $(T(-c,c),g^T)$ by ${\rm BK}$, then
\begin{equation}
{\rm BK}=-\frac{1}{4}g^{-2}\frac{\partial^2g}{\partial x^2}\circ\pi+\frac{1}{4}g^{-3}\Big(\frac{\partial g}{\partial x}\Big)^2\circ\pi<c'<0.
\end{equation}
Finally, by Theorem \ref{t3.7}, we obtain that the affine map $f$ from $M$ into $(-c,c)$ is constant.
\end{proof}

\textbf{Acknowledgement.} I would like to thank Professor Jiao Heming for suggesting him to study some elliptic/parabolic equations on Hessian manifolds.

\end{document}